\title{Non-vanishing theorem for log canonical pairs}
\author{Osamu Fujino} 
\subjclass[2000]{Primary 14E30; Secondary 14C20.}
\date{2009/12/1}
\address{Department of Mathematics, Faculty of 
Science, Kyoto University, 
Kyoto 606-8502 Japan} 
\email{fujino@math.kyoto-u.ac.jp}
\newcommand{\Supp}[0]{{\operatorname{Supp}}}
\newcommand{\Bs}[0]{{\operatorname{Bs}}}
\newcommand{\Exc}[0]{{\operatorname{Exc}}}
\newcommand{\mult}[0]{{\operatorname{mult}}}
\newtheorem{thm}{Theorem}[section]
\newtheorem{lem}[thm]{Lemma}
\newtheorem{cla}{Claim}
\theoremstyle{definition}
\newtheorem{rem}[thm]{Remark}
\newtheorem*{ack}{Acknowledgments}      
\newtheorem*{notation}{Notation}
\begin{document}
\bibliographystyle{amsalpha+}

\maketitle

\begin{abstract}
We obtain a correct generalization of 
Shokurov's non-vanishing theorem for log canonical 
pairs. 
It implies the base point free 
theorem for log canonical pairs. 
We also prove the rationality theorem for log canonical pairs. 
As a corollary, we obtain 
the cone theorem for 
log canonical pairs. 
We do not need Ambro's theory of quasi-log varieties. 
\end{abstract} 

\tableofcontents
\section{Introduction}

The following theorem is the main theorem of this paper. 
It is a generalization of Shokurov's non-vanishing theorem 
for log canonical pairs. 
This new non-vanishing theorem greatly simplifies  
the proof of the fundamental theorems for log canonical 
pairs. In this paper, we do not need Ambro's framework 
of {\em{quasi-log varieties}} in \cite{ambro}. 

We will work over $\mathbb C$, the complex 
number field, throughout this paper. 

\begin{thm}[Non-vanishing theorem]\label{31}
Let $X$ be a normal projective variety and 
$B$ an effective $\mathbb Q$-divisor 
on $X$ such that 
$(X, B)$ is log canonical. 
Let $L$ be a nef Cartier 
divisor on $X$. 
Assume that $aL-(K_X+B)$ is ample for some 
$a>0$. 
Then the base locus of the linear system 
$|mL|$ contains no lc centers of $(X, B)$ for every $m\gg 0$, 
that is, 
there is a positive integer $m_0$ such that 
$|mL|$ contains no lc centers of $(X, B)$ for every $m\geq m_0$. 
\end{thm}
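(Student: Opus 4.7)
The plan is to adapt the classical Shokurov-Kawamata X-method for non-vanishing to the log canonical setting, substituting Fujino's vanishing and injectivity theorems for simple normal crossing pairs in place of Kawamata-Viehweg vanishing. The overall induction will be on the dimension of the minimal lc center $W$ of $(X,B)$ that one wants to exclude from $\Bs|mL|$; the base case $\dim W = 0$ reduces to a twisted Serre-type vanishing.

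First I would choose a log resolution $f\colon Y\to X$ of $(X,B)$ and write $K_Y+\Delta_Y = f^*(K_X+B)+E_Y$ with $\Delta_Y, E_Y$ effective, without common components, and $\Supp(\Delta_Y+E_Y)$ snc. Put $S=\lfloor\Delta_Y\rfloor$, so that the lc centers of $(X,B)$ are images of strata of $S$; arrange $f$ so that a chosen minimal lc center $W$ equals $f(T)$ for some stratum $T\subset S$. For $q\gg 0$, a general $D\in|q\cdot f^*(aL-(K_X+B))|$ is then used, X-method style, to perturb the boundary and create a new, strictly smaller isolated non-klt place $T'\subset T$, while keeping the old lc strata of $S$ intact. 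Fujino's vanishing theorem for the resulting snc pair then yields $H^1$-vanishing for the ideal sheaf of $T'$ after the appropriate twist; combined with the projection formula and $Rf_*$, this lifts sections of $\mathcal{O}_X(mL)$ from $T'$ through $Y$ to $X$ without vanishing along $W$. Producing the section on $T'$ in the first place is itself a lower-dimensional non-vanishing problem, solved by adjunction plus the inductive hypothesis applied to an induced lc pair on $T'$ whose ample twist is the restriction of $L$. Uniformity of $m_0$ over all lc centers follows from their finiteness together with an effective bound on $q$ coming from $\Exc(f)$ and $a$.

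The main obstacle is the perturbation itself: the coefficient of $D$ must be tuned so that simultaneously every previously existing lc center of $(X,B)$ remains an lc stratum of the new boundary (so the induction applies cleanly without losing track of what $|mL|$ must avoid), a strictly smaller isolated non-klt place appears inside the chosen $W$ (giving a target to lift from), and the total boundary stays snc on $Y$ (so Fujino's snc vanishing applies verbatim). This is the log canonical analogue of Kawamata's tie-breaking, made delicate by the fact that naive perturbation can turn an old lc center klt or collapse it entirely. It is precisely at this step that Fujino's snc-pair vanishing theorems replace Ambro's quasi-log framework, which is the technical novelty that drives the rest of the paper.
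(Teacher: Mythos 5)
Your outline has the right general shape (perturb the boundary, pass to a log canonical threshold, cut the non-klt locus down inside $W$, and lift sections using vanishing), but two of its load-bearing steps fail as described. First, the perturbation: a \emph{general} member of $|q f^*(aL-(K_X+B))|$ is a general member of a free linear system and does not by itself produce a non-klt place located inside $W$; you need Shokurov's concentration method, i.e., a divisor with multiplicity $>\dim W$ (after normalizing the weight) at a general point of $W$. Moreover, the perturbing $\mathbb{Q}$-divisor must be $\mathbb{Q}$-linearly equivalent to $lL-(K_X+B)$ with the coefficient of $-(K_X+B)$ equal to $1$, since only then does $a'L-(K_X+B+cM)\sim_{\mathbb{Q}}(1-c)(aL-(K_X+B))$ remain ample after the replacement. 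With that normalization the only way to gain multiplicity is to let $l\to\infty$, and this works precisely when $L|_W$ is \emph{not} numerically trivial; if $L|_W\equiv 0$ then $\bigl((lL-(K_X+B))|_W\bigr)^{\dim W}$ is bounded in $l$ and the concentration may fail. So your descent can get stuck at a positive-dimensional minimal lc center with $L|_W\equiv 0$: the correct terminal case is not ``$\dim W=0$'' but ``$L|_W$ numerically trivial,'' which the paper handles by Riemann--Roch: $h^0(W,\mathcal O_W(L))=\chi(W,\mathcal O_W(L))=\chi(W,\mathcal O_W)=1$ by Kleiman's theorem together with the vanishing theorem on the (normal) minimal lc center, so $L|_W$ is linearly trivial and its nowhere-vanishing section lifts by the surjectivity of $H^0(X,\mathcal O_X(mL))\to H^0(W,\mathcal O_W(mL))$. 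Nothing in your sketch substitutes for this step.

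Second, the inductive step ``produce the section on $T'$ by adjunction plus the inductive hypothesis'' does not go through. On the resolution $Y$ the restriction of $f^*(aL-(K_X+B))$ to a stratum $T'$ is only semiample (it is pulled back from $W'\subset X$), so the hypothesis of the theorem --- that the twist be \emph{ample} --- fails for the snc pair on $T'$; and descending instead to the lc center $W'$ requires endowing it with an induced lc pair structure, i.e., subadjunction for lc centers, a far deeper input that is not available in the required generality and is nowhere needed in the paper. The paper avoids induction on dimension and adjunction entirely: it stays on $X$, replaces $(X,B)$ by $(X,B+cM)$ finitely many times (the minimal lc center strictly drops at each step), and obtains everything it needs on $W$ from the surjectivity in Theorem \ref{thm5} and the vanishing of Theorem \ref{key-vani}, the latter proved via a dlt modification rather than adjunction. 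Likewise, your insistence on keeping all old lc strata intact is unnecessary: since every lc center of $(X,B)$ contains a minimal one and there are finitely many, it suffices to treat minimal lc centers of the successively modified pairs. I would rework the argument along these lines.
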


By this new non-vanishing theorem, we can easily obtain the base point 
free theorem for log canonical pairs. 

\begin{thm}[Base point free theorem]\label{main1}
Let $X$ be a normal projective variety and 
$B$ an effective $\mathbb Q$-divisor 
on $X$ such that 
$(X, B)$ is log canonical. 
Let $L$ be a nef Cartier 
divisor on $X$. 
Assume that $aL-(K_X+B)$ is ample for some 
$a>0$. 
Then the linear system 
$|mL|$ is base point free for every $m\gg 0$. 
\end{thm}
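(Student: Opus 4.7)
The plan is to deduce Theorem~\ref{main1} from the non-vanishing theorem (Theorem~\ref{31}) via the classical Kawamata--Shokurov tie-breaking argument; the lc-specific difficulties are absorbed into the stronger ``no lc center in the base locus'' conclusion of Theorem~\ref{31} itself.

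First I would use Theorem~\ref{31} to confine the base locus to the klt part of $X$: taking $m_0$ as in its conclusion, for every $m\geq m_0$ the set $\Bs|mL|$ contains no lc center of $(X,B)$, and in particular is disjoint from the non-klt locus $\mathrm{Nklt}(X,B)$. A Noetherian induction on the stable base locus then reduces the theorem to the following statement: for every irreducible component $W$ of $\bigcap_{m\geq m_0}\Bs|mL|$ and any general point $x\in W$, which necessarily lies in the klt open set $U:=X\setminus\mathrm{Nklt}(X,B)$, there exist some $m\geq m_0$ and a section of $mL$ that does not vanish at $x$.

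To produce such a section I would apply tie-breaking at $x$. Using Kawamata--Viehweg vanishing on the blow-up $\pi\colon Y\to X$ at $x$ (applicable because $aL-(K_X+B)$ is ample, so $\pi^{*}(m_1L-(K_X+B))-kE$ is nef and big for $m_1\gg 0$ and suitable $k$), one constructs an effective divisor $D\sim m_1L$ with large multiplicity at $x$ but controlled multiplicity along the existing lc centers of $(X,B)$. After adding a small general ample perturbation, one picks a rational $t>0$ so that the modified pair $(X,B+tD')$ is globally log canonical and has a new minimal lc center $W'\ni x$ with $\dim W'<\dim W$. Because
\[
mL-(K_X+B+tD')=(m-tm_1)L-(K_X+B)-t\cdot(\text{perturbation})
\]
is ample for all $m\gg 0$, Theorem~\ref{31} applied to $(X,B+tD')$ yields $W'\not\subset\Bs|mL|$, hence $x\notin\Bs|mL|$, contradicting $x\in W\subseteq\Bs|mL|$.

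The main obstacle is the \emph{global} tie-breaking step: one must choose $D'$ and $t$ so that $(X,B+tD')$ is log canonical on all of $X$ (so Theorem~\ref{31} can be invoked) while simultaneously forcing a single new minimal lc center through $x$ and not merging it with the pre-existing lc centers of $(X,B)$. This requires a careful discrepancy bookkeeping on a common log resolution of $(X, B+D)$ and a sufficiently general choice of the ample perturbation. Once this is accomplished, a standard prime-power bootstrap (for instance comparing $|2^{n}L|$ and $|3^{n}L|$) upgrades the existence of \emph{some} $m$ with $\Bs|mL|=\emptyset$ to the statement that $|mL|$ is base point free for \emph{all} $m\gg 0$, yielding Theorem~\ref{main1}.
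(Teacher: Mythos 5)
Your overall architecture (non-vanishing $\to$ lc threshold $\to$ Noetherian induction $\to$ two-prime bootstrap) is the right one, but the central step does not close. The decisive problem is the inference ``Theorem~\ref{31} applied to $(X,B+tD')$ yields $W'\not\subset\Bs|mL|$, hence $x\notin\Bs|mL|$.'' Theorem~\ref{31} only asserts that $\Bs|mL|$ does not \emph{contain} the lc center $W'$; it says nothing about individual points of $W'$. A single application of the concentration method at $x$ produces \emph{some} lc center through $x$, in general of positive dimension, so $x$ may perfectly well remain in $\Bs|mL|$ even though $W'\not\subset\Bs|mL|$. Worse, the center $W'$ manufactured by tie-breaking at $x$ need not be contained in $W$ or in $\Bs|mL|$ at all, so you also cannot conclude that the base locus has strictly decreased; your Noetherian induction therefore has nothing to induct on. (A smaller instance of the same over-reading occurs at the start: ``contains no lc center'' does not give ``disjoint from $\mathrm{Nklt}(X,B)$''; it only gives that a \emph{general} point of each component of $\Bs|mL|$ lies in the klt locus.) A secondary but genuine issue is the divisor $D\sim m_1L$ with $\mult_x D$ large: since $L$ is merely nef (possibly not big), $|m_1L|$ need not have enough sections to impose multiplicity $>n$ at $x$. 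The concentration method produces an effective $\mathbb Q$-divisor $\mathbb Q$-linearly equivalent to the \emph{ample} divisor $m_1L-(K_X+B)$, not to $m_1L$, and your displayed ampleness computation should be adjusted accordingly (compare the identity $(a-ac+cl)L-(K_X+B+cM)\sim_{\mathbb Q}(1-c)(aL-(K_X+B))$ used in the paper).

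The paper avoids all of this by never tie-breaking at a point of the base locus. After first reducing to the case where $(X,B)$ is lc but not klt (there the concentration method is applied at a \emph{general smooth point of $X$}, which is harmless), it takes $n+1$ general members $D_1,\dots,D_{n+1}\in|p^{m_1}L|$. Every $D_i$ contains $\Bs|p^{m_1}L|$, so $D=\sum D_i$ has multiplicity $\geq n+1$ along each component of the base locus and hence $(X,B+D)$ fails to be lc exactly there, while generality of the $D_i$ and Theorem~\ref{31} (which guarantees the $D_i$ miss all lc centers of $(X,B)$, so the threshold is positive) keep the pair lc elsewhere. The lc threshold then produces a new lc center of $(X,B+cD)$ that is automatically \emph{contained in} $\Bs|p^{m_1}L|$, and applying Theorem~\ref{31} to the new pair forces $\Bs|p^{m_2}L|\subsetneq\Bs|p^{m_1}L|$, which is what makes the Noetherian induction run. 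If you want to keep your point-wise strategy, you would have to iterate the cutting-down of minimal lc centers until $L$ is numerically trivial on the minimal center through $x$ --- which is essentially re-proving Theorem~\ref{31} rather than applying it.
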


Theorem \ref{main1} is a special case of \cite[Theorem 5.1]{ambro}. 
We can also prove the rationality theorem for log canonical 
pairs without any difficulties. It is a special 
case of \cite[Theorem 5.9]{ambro}. 

\begin{thm}[Rationality theorem]\label{61}
Let $(X, B)$ be a projective log canonical pair such that 
$a(K_X+B)$ is Cartier for a positive integer $a$. 
Let $H$ be an ample Cartier divisor 
on $X$. Assume that $K_X+B$ is not nef. 
We put 
$$
r=\max \{ t \in \mathbb R \,|\, H+t(K_X+B)  \text{ is nef}\,  \}. 
$$ 
Then $r$ is a rational number of the 
form $u/v$ $($$u, v\in \mathbb Z$$)$ where 
$0<v\leq a(\dim X+1)$. 
\end{thm}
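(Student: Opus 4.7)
The plan is to argue by contradiction along the lines of Kawamata's classical proof of the rationality theorem, using Theorem~\ref{31} as the non-vanishing input that, in the klt case, is supplied by Kodaira-type vanishing.

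Suppose $r$ is either irrational or $r = u/v$ in lowest terms with $v > a(\dim X + 1)$. For positive integers $p, q$, consider the Cartier divisor
\[
M(p,q) := pH + qa(K_X + B).
\]
Because $H + t(K_X + B)$ is ample precisely when $t < r$ and nef when $t \le r$, the divisor $M(p,q)$ is ample whenever $qa/p < r$, and in that range $\alpha M(p,q) - (K_X+B) = \alpha p H + (\alpha qa - 1)(K_X + B)$ is ample for every $\alpha > 1/(pr - qa)$. So Theorem~\ref{31} applies, giving for each such $(p,q)$ an integer $m_0 = m_0(p,q)$ such that $\Bs|mM(p,q)|$ contains no lc center of $(X,B)$ for all $m \ge m_0$; in particular $|mM(p,q)|$ is nonempty.

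By Dirichlet's approximation theorem (if $r \notin \mathbb{Q}$) or a direct pigeonhole using the denominator lower bound (if $r \in \mathbb{Q}$), one produces infinitely many pairs $(p,q)$ of positive integers with $0 < pr - qa$ arbitrarily small. For such a pair $M(p,q)$ is ample while $M(p,q+1)$ is already not nef. The strategy is then to use the sections from Theorem~\ref{31} to produce a divisor $D \in |mM(p,q)|$ whose support avoids a prescribed lc center of $(X,B)$, perturb to the boundary $B + \tfrac{1}{m}D$ so as to create a new, deeper minimal lc center $W \subsetneq X$, restrict by adjunction to an lc pair $(W, B_W)$ on which the analogous setup reappears (with $M(p,q)|_W$ nef and $\alpha M(p,q)|_W - (K_W + B_W)$ ample), and induct on $\dim W$. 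After at most $\dim X$ such restrictions one reaches a stratum where the rationality of the restricted threshold is manifest, and unwinding the induction produces $r = u/v$ with $v \le a(\dim X + 1)$, contradicting the hypothesis.

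The main obstacle is the adjunction / restriction step. For log canonical (not klt) pairs, adjunction to an lc center carries subtle torsion, and the usual Kawamata--Viehweg vanishing on $W$ fails in general. This is precisely where Theorem~\ref{31} is decisive: applicable uniformly to lc pairs and compatible with passage to lc centers, it supplies the required non-vanishing at each inductive step without invoking the quasi-log machinery of \cite{ambro}. The final Diophantine bookkeeping, including the bound $\dim X + 1$ on the number of inductive steps and the factor $a$ from the Cartier index of $K_X+B$, then follows Kawamata's original pattern.
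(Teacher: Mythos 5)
Your proposal does not follow the paper's argument, and as written it has gaps that I do not think can be repaired along the lines you sketch. First, a structural point: the paper's proof never invokes Theorem \ref{31}. It is the Koll\'ar--Mori argument essentially verbatim, with the lc vanishing theorem (Theorem \ref{thm5}) replacing Kawamata--Viehweg: one studies $M(p,q)=pH+qa\omega$ for $(p,q)$ on the \emph{non-nef} side of the threshold, i.e.\ in the strip $0<aq-rp<1$, where $M(p,q)$ is not nef and hence $\Bs|M(p,q)|=L(p,q)$ stabilizes to a non-empty set $L_0$; one then cuts with $n+1$ general members of $|M(p_0,q_0)|$, passes to the lc threshold to create an lc center $C\subset L_0$ of a new pair $(X,B+cD)$, and derives a contradiction by producing a section of $M(p,q)$ itself that does not vanish along $C$. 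You instead work entirely on the ample side $qa<pr$, where $M(p,q)$ is ample and $|mM(p,q)|$ is eventually free for trivial reasons; no contradiction with the definition of $r$ can be extracted there. Moreover, Theorem \ref{31}'s conclusion ``for $m\gg 0$'' is useless in this problem: $mM(p,q)=M(mp,mq)$ and $a(mq)-r(mp)=m(aq-rp)$ leaves the strip $0<aq-rp<1$ as soon as $m$ is large, whereas the whole proof hinges on controlling $|M(p,q)|$ for $(p,q)$ \emph{in} the strip. The correct substitute is Claim 2 of the paper: since $M(p,q)-\omega$ is ample in the strip, $h^0(C,\mathcal O_C(M(p,q)))=\chi(C,\mathcal O_C(M(p,q)))$ by Theorem \ref{thm5}, and this Euler characteristic is a non-trivial polynomial of degree $\leq n$, so it cannot vanish at all large lattice points of the strip (Lemma \ref{62}).

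The two remaining gaps are the ones you half-acknowledge. (1) The adjunction step: there is no adjunction formula $K_W+B_W=(K_X+B)|_W$ for an lc center $W$ of codimension $\geq 2$ in this framework (that is Kawamata subadjunction territory, with moduli-part error terms), so the inductive restriction to ``$(W,B_W)$'' is not available; calling Theorem \ref{31} ``compatible with passage to lc centers'' does not supply it. The paper's proof avoids restriction-as-a-pair entirely: it only ever restricts \emph{sections} to $C$ and uses surjectivity of $H^0(X,\mathcal O_X(M(p,q)))\to H^0(C,\mathcal O_C(M(p,q)))$ from Theorem \ref{thm5}. (2) The denominator bound $v\leq a(n+1)$: in every version of this theorem the bound comes from Lemma \ref{62} applied to a non-zero polynomial of total degree $\leq n=\dim X$ (namely $\chi(C,\mathcal O_C(pH+qa\omega))$) vanishing on lattice points in a strip. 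Your sketch contains no such polynomial, and ``at most $\dim X$ restrictions'' plus ``Diophantine bookkeeping'' does not convert into the stated bound. I would redo the proof following the paper's Claims 1 and 2 and the two applications of Lemmas \ref{62} and \ref{63}.
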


As a corollary, we obtain the cone theorem for 
log canonical pairs. 
It is a formal consequence of the rationality and 
the base point free theorems. 
It is a special case of \cite[Theorem 5.10]{ambro}. 

\begin{thm}[Cone theorem]\label{cone} 
Let $(X, B)$ be a projective 
log canonical pair. 
Then we have 
\begin{itemize}
\item[(i)] There are $($countably many$)$ 
rational curves $C_j\subset X$ such that $0<-(K_X+B)\cdot C_j\leq 
2\dim X$, and 
$$
\overline {NE}(X)=\overline {NE}(X)_{(K_X+B)\geq 0}
+\sum \mathbb R_{\geq 0}[C_j]. 
$$
\item[(ii)] For any $\varepsilon >0$ and ample 
$\mathbb Q$-divisor $H$, 
$$
\overline {NE}(X)=\overline {NE}(X)_{(K_X+B+\varepsilon 
H)\geq 0}
+\underset{\text{finite}}\sum \mathbb R_{\geq 0}[C_j]. 
$$
\item[(iii)] Let $F\subset \overline {NE}(X)$ be a 
$(K_X+B)$-negative 
extremal face. Then there is a unique morphism 
$\varphi_F:X\to Z$ such that 
$(\varphi_F)_*\mathcal O_X\simeq \mathcal O_Z$, 
$Z$ is projective, and 
an irreducible curve $C\subset X$ is mapped to a point 
by $\varphi_F$ if and only if $[C]\in F$. 
The map 
$\varphi_F$ is called the {\em{contraction}} of $F$. 
\item[(iv)] Let $F$ and $\varphi_F$ be as in {\em{(iii)}}. Let $L$ 
be a line bundle on $X$ such that 
$L\cdot C=0$ for every curve $C$ with 
$[C]\in F$. Then there is a line bundle $L_Z$ on $Z$ such that 
$L\simeq \varphi_F^*L_Z$. 
\end{itemize}
\end{thm}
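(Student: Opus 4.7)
The plan is to deduce all four parts of Theorem \ref{cone} from the base point free theorem (Theorem \ref{main1}) and the rationality theorem (Theorem \ref{61}) by the classical Kawamata-style convex-geometric argument. Since these two fundamental theorems are now available in the log canonical setting, the cone theorem follows formally, as the excerpt suggests, with the single non-formal ingredient being the existence of \emph{rational} curves generating extremal rays together with the length bound $-(K_X+B)\cdot C_j \leq 2\dim X$.

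For (i), I would work in $N_1(X)_{\mathbb{R}}$. Assuming $K_X+B$ is not nef, I pick an ample Cartier divisor $H$ and use the rationality theorem to extract a rational $r > 0$ such that, for suitably divisible $a$, the divisor $L := a(H + r(K_X+B))$ is a nef Cartier divisor with $aL - (K_X+B)$ ample. Theorem \ref{main1} then makes $|mL|$ base point free for $m \gg 0$, producing a contraction whose exceptional face lies on the supporting hyperplane $L^{\perp}$ inside the $(K_X+B)$-negative half-space of $\overline{NE}(X)$. Varying $H$ throughout the ample cone and taking limits of such supporting hyperplanes, I would produce countably many extremal $(K_X+B)$-negative rays $R_j = \mathbb{R}_{\geq 0}[C_j]$; a separating-hyperplane argument then shows that $\overline{NE}(X)_{(K_X+B)\geq 0}$ together with these rays spans $\overline{NE}(X)$, with accumulation only on the hyperplane $(K_X+B) = 0$.

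Parts (iii), (iv), and (ii) are then formal. For a $(K_X+B)$-negative extremal face $F$, I intersect finitely many supporting hyperplanes constructed as above to obtain a nef $\mathbb{Q}$-Cartier divisor $L_F$ with $L_F^{\perp} \cap \overline{NE}(X) = F$ and $a L_F - (K_X+B)$ ample; Theorem \ref{main1} applied to $L_F$ yields the contraction $\varphi_F$ after Stein factorization, with uniqueness and $(\varphi_F)_*\mathcal{O}_X \simeq \mathcal{O}_Z$ automatic. For (iv), if $L$ is $F$-trivial, I apply Theorem \ref{main1} to a sufficiently divisible ample $N$ and also to $N + L$, and compare the resulting pullbacks from $Z$ to produce the descent $L_Z$. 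Part (ii) follows from (i) because only finitely many rays $R_j$ satisfy $(K_X+B+\varepsilon H)\cdot C_j < 0$: on such a ray $H\cdot C_j \leq -(K_X+B)\cdot C_j/\varepsilon \leq 2\dim X/\varepsilon$, leaving only finitely many such rays in any bounded slice.

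The hard part is the existence of the rational curves $C_j$ together with the length bound $-(K_X+B)\cdot C_j \leq 2\dim X$, which is not a formal consequence of Theorems \ref{main1} and \ref{61}. My approach would be Mori's bend-and-break applied to a general fiber of the extremal contraction $\varphi_{R_j}$, after first passing to a log resolution $f\colon Y \to X$ so that bend-and-break is available on a smooth variety, and then pushing the resulting rational curve down to $X$ via the projection formula. The genuine subtlety is that in the lc setting, exceptional divisors of $f$ may carry lc centers, so one must carefully track discrepancies to ensure that the extracted curve on $Y$ is not $f$-contracted and that the length estimate survives the pushforward to $X$.
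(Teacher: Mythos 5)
Your treatment of the formal parts coincides with the paper's: the decomposition in (i) and the finiteness in (ii) are extracted from the rationality theorem by the standard convex-geometric argument of Koll\'ar--Mori, and (iii), (iv) follow from the base point free theorem applied to a nef Cartier divisor supporting the face $F$, exactly as the paper indicates by citing \cite[Theorem 3.15]{km} and Steps 7 and 9 of \cite[3.3]{km}. Your observation that the only non-formal ingredient is the existence of rational curves with $0<-(K_X+B)\cdot C_j\leq 2\dim X$ is also correct and matches the paper's presentation.

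However, your plan for that non-formal ingredient has a genuine gap. You propose to run Mori's bend-and-break on a general fiber of $\varphi_{R_j}\circ f$ for a log resolution $f\colon Y\to X$, and you locate the difficulty in ``tracking discrepancies.'' The real obstruction is earlier: bend-and-break requires a curve $C$ in the fiber with $-K_Y\cdot C>0$ (or $-(K_Y+\Delta)\cdot C>0$ for an \emph{effective} $\Delta$), and on a log resolution of an lc pair one only controls $f^*(K_X+B)=K_Y+B_Y$, where $B_Y$ typically has components with negative coefficients and with coefficient exactly $1$; the restriction of $-K_Y$ to a general fiber of the resolved contraction has no positivity whatsoever, so bend-and-break does not get started. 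This is precisely why the paper invokes Kawamata's argument from \cite{kawamata} ``with the aid of \cite{bchm}'': one must run a relative $(K_Y+B_Y')$-MMP over $Z$ for a suitable auxiliary klt boundary $B_Y'$, terminating (by \cite{bchm}) in a Mori fiber space, on whose fibers the rational curves with the required length bound exist and can then be transported back to $X$ without increasing $-(K_X+B)\cdot C$ beyond $2\dim X$. Without this MMP step (or some substitute for it), your sketch of part (i) does not close; the rest of your argument is fine and is essentially the paper's.
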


In \cite{ambro}, Ambro did not discuss any generalization 
of Shokurov's non-vanishing theorem. 
Instead, he introduced the notion of {\em{quasi-log varieties}} 
and proved the base point free theorem for quasi-log varieties by 
induction on the dimension. 
His approach is natural but demands some very complicated 
and powerful vanishing and torsion-free theorems for reducible 
varieties. For the details of the theory of quasi-log varieties, 
see \cite{book}. We note that \cite{fujino4} is a quick introduction to 
the theory of quasi-log varieties. 

All the results in this paper are stated and investigated 
in \cite{book} following \cite{ambro}. 
So, the contribution of this paper is to give 
a correct formulation of Shokurov's non-vanishing theorem 
for log canonical pairs and prove it in a simple manner. 
Once we obtain correct formulations of vanishing and 
non-vanishing theorems for log canonical 
pairs (see Theorems \ref{thm5}, \ref{key-vani}, and Theorem \ref{31}), 
there are no difficulties to obtain the fundamental 
theorems for log canonical pairs. 
I hope that this paper will supply a new method to study linear 
systems on log canonical pairs. 
I recommend the reader to see \cite{fujino5}, 
\cite{fujino6}, 
\cite{fujino7}, \cite{fujino8}, and \cite{fujino-takagi} for further 
studies. 

We summarize the contents of this paper. 
In Section \ref{sec2}, we collect some preliminary results 
on vanishing and torsion-free theorems. 
We prove the basic properties of lc centers. 
This section contains no new results. 
In Section \ref{sec3}, we give a proof of the non-vanishing theorem. 
This section is the main part of this paper. 
Our proof is short and very easy to understand. 
Shokurov's concentration method is the main ingredient of Section \ref{sec3}. 
Section \ref{sec4} is devoted to the proof of the 
base point free theorem. The reader will be surprised since  
our proof is very easy and understand that our non-vanishing theorem 
is powerful. 
In Section \ref{sec5}, 
we prove the rationality theorem for 
log canonical pairs. The proof is essentially the same 
as the one for klt pairs. We need only the vanishing theorem 
given in Section \ref{sec2} (cf.~Theorem \ref{thm5}) 
to obtain the rationality theorem. 
In the final section:~Section \ref{sec6}, 
we give a proof of the cone theorem. 
The reader who understands \cite{fujino} can read this paper without 
any difficulties. 

We close this introduction with the following notation. 

\begin{notation}
Let $X$ be a normal variety and $B$ an effective 
$\mathbb Q$-divisor such that 
$K_X+B$ is $\mathbb Q$-Cartier. 
Then we can define the {\em{discrepancy}} 
$a(E, X, B)\in \mathbb Q$ for 
every prime divisor $E$ {\em{over}} $X$. 
If $a(E, X, B)\geq -1$ (resp.~$>-1$) for 
every $E$, then $(X, B)$ is called {\em{log canonical}} 
(resp.~{\em{kawamata log terminal}}). 
We sometimes abbreviate log canonical (resp.~kawamata log terminal) 
to {\em{lc}} (resp.~{\em{klt}}). 

Assume that $(X, B)$ is log canonical. 
If $E$ is a prime divisor over $X$ 
such that 
$a(E, X, B)=-1$, 
then $c_X(E)$ is called a {\em{log canonical center}} 
({\em{lc center}}, for short) 
of $(X, B)$, where $c_X(E)$ is 
the closure of the 
image 
of $E$ on $X$. 

Let $(X, B)$ be a log canonical pair and 
$M$ an effective $\mathbb Q$-divisor on $X$. 
The {\em{log canonical threshold}} of $(X, B)$ with respect to 
$M$ is defined by 
$$
c=\sup \{ t \in \mathbb R \,|\, (X, B+tM) \ 
{\text{is log canonical}}\}. 
$$ 
We can easily check that $c$ is a rational number and 
that $(X, B+cM)$ is lc but not klt. 

Let $(X, B)$ be a log canonical pair. 
Then a {\em{stratum}} of $(X, B)$ denotes $X$ itself or 
an lc center of $(X, B)$. 

Let $Y$ be a smooth variety and $T$ a simple normal crossing divisor 
on $Y$. Then a {\em{stratum}} of $T$ means an lc center of 
the pair $(Y, T)$. 

Let $r$ be a rational number. 
The integral part $\llcorner r\lrcorner$ is 
the largest integer $\leq r$ and 
the fractional part $\{r\}$ is defined by $r-\llcorner r\lrcorner$. 
We put $\ulcorner r\urcorner =-\llcorner -r \lrcorner$ and 
call it the round-up of $r$. 
For a $\mathbb Q$-divisor $D=\sum _{i=1}^r d_i D_i$, 
where $D_i$ is a prime divisor for every $i$ and $D_i\ne D_j$ for 
$i\ne j$, we call $D$ a {\em{boundary}} $\mathbb Q$-divisor 
if $0\leq d_i \leq 1$ for every $i$. 
We note that $\sim_{\mathbb Q}$ denotes 
the $\mathbb Q$-linear equivalence of $\mathbb Q$-Cartier 
$\mathbb Q$-divisors. 
We put $\llcorner D\lrcorner =
\sum \llcorner d_i\lrcorner D_i$, 
$\ulcorner D\urcorner =
\sum \ulcorner d_i\urcorner D_i$, 
$\{D\} =
\sum \{d_i\} D_i$, $D^{<1}=\sum _{d_i<1}d_iD_i$, and 
$D^{=1}=\sum _{d_i=1}D_i$. 

We write $\Bs|L|$ to denote the {\em{base locus}} 
of the linear system $|L|$. 
\end{notation}

\begin{ack}
The author 
was partially supported by The Inamori Foundation and by 
the Grant-in-Aid for Young Scientists (A) $\sharp$20684001 from 
JSPS. He would like to thank Takeshi Abe for discussions. 
\end{ack}

\section{On vanishing and torsion-free theorems}\label{sec2}

In this section, we collect some preliminary results for the 
reader's convenience. 
The next theorem is 
a very special case of \cite[Theorem 3.2]{ambro}. 

\begin{thm}[Torsion-freeness and 
vanishing theorem]\label{thm43}Let 
$Y$ be a smooth projective variety and 
$B$ a boundary $\mathbb Q$-divisor such that 
$\Supp B$ is simple normal crossing. 
Let $f:Y\to X$ be a projective morphism and $L$ a Cartier 
divisor on $Y$ such that 
$H\sim _{\mathbb Q}L-(K_Y+B)$ is $f$-semi-ample. 
\begin{itemize}
\item[(i)] Every non-zero local section of $R^qf_*\mathcal O_Y(L)$ contains 
in its support the $f$-image of 
some stratum of $(Y, B)$. 
\item[(ii)] 
Assume that $H\sim _{\mathbb Q}f^*H'$ for 
some ample $\mathbb Q$-Cartier 
$\mathbb Q$-divisor $H'$ on $X$. 
Then $H^p(X, R^qf_*\mathcal O_Y(L))=0$ for every $p>0$ 
and $q\geq 0$. 
\end{itemize}
\end{thm}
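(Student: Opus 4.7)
The plan is to deduce this theorem from the classical Koll\'ar--Esnault--Viehweg torsion-freeness and vanishing theorems (which handle the case $\lfloor B \rfloor = 0$) by induction on the number $N$ of irreducible components of the reduced part $\lfloor B \rfloor = B^{=1}$. For the base case $N = 0$, the pair $(Y, B)$ is klt with SNC boundary, $Y$ itself is the unique stratum, and (i) reduces to the assertion that $R^q f_* \mathcal{O}_Y(L)$ is torsion-free in the classical sense, while (ii) is the classical relative Koll\'ar-type vanishing theorem applied to $\omega_Y \otimes \mathcal{O}_Y(L - K_Y)$ with $L - K_Y - B \sim_\mathbb{Q} H$ semi-ample (and pulled back from an ample divisor on $X$ for (ii)). Both are available in the literature; if only the case $B = 0$ were at hand, one would recover them via a Kawamata-type cyclic cover absorbing the fractional part of $B$.

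For the inductive step, pick an irreducible component $S$ of $\lfloor B \rfloor$ and write $B = S + B'$. I would use the short exact sequence
$$0 \to \mathcal{O}_Y(L-S) \to \mathcal{O}_Y(L) \to \mathcal{O}_S(L|_S) \to 0.$$
Adjunction gives $(K_Y + B)|_S = K_S + B'|_S$, so $L|_S - (K_S + B'|_S) \sim_\mathbb{Q} H|_S$ is $f|_S$-semi-ample, and induction on $\dim Y$ applies to $(S, B'|_S, L|_S)$. Meanwhile $(Y, B', L-S)$ has one fewer coefficient-$1$ component in its boundary, so induction on $N$ applies. In the long exact sequence
$$\cdots \to R^q f_* \mathcal{O}_Y(L-S) \to R^q f_* \mathcal{O}_Y(L) \to R^q f_* \mathcal{O}_S(L|_S) \xrightarrow{\delta} R^{q+1} f_* \mathcal{O}_Y(L-S) \to \cdots,$$
the crux is to show $\delta = 0$: the image of $\delta$ is a subsheaf of $R^{q+1} f_* \mathcal{O}_Y(L-S)$ supported set-theoretically in $f(S)$, yet by the inductive (i) every nonzero local section of $R^{q+1} f_* \mathcal{O}_Y(L-S)$ contains in its support the $f$-image of some stratum of $(Y, B')$, and no such stratum is contained in $S$ (since $S$ is not a component of $B'$). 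Granting $\delta = 0$, the long exact sequence breaks into short exact sequences, and both (i) and (ii) for $R^q f_* \mathcal{O}_Y(L)$ drop out of the corresponding inductive statements on the flanking terms, since the strata of $(Y, B)$ are exactly those of $(Y, B')$ together with those of $(S, B'|_S)$.

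The principal obstacle is to execute the vanishing $\delta = 0$ rigorously: the slogan ``image of $\delta$ lies in $f(S)$, while torsion-freeness forces a nonzero section to spread off of $f(S)$'' must be made precise locally on $X$, and it is exactly here that the full strength of the inductive (i) is needed. A secondary technical point is to verify that the base case in the literature is stated in exactly the generality we need (SNC $\mathbb{Q}$-boundary with $\lfloor B \rfloor = 0$ on a smooth $Y$, $f$-semi-ample twist, over an arbitrary projective base $X$); if not, one would recover it from the smooth Koll\'ar theorem by the cyclic-cover reduction mentioned above.
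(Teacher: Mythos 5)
There is a genuine gap at exactly the step you flag as the crux, and the mechanism you propose for closing it cannot work. The image of $\delta$ is supported in $f(S)$, and the inductive statement (i) for $(Y,B',L-S)$ tells you that a non-zero local section of $R^{q+1}f_*\mathcal O_Y(L-S)$ has support containing $f(Z)$ for some stratum $Z$ of $(Y,B')$. To get a contradiction you need $f(Z)\not\subseteq f(S)$ for every stratum $Z$, but your argument only checks $Z\not\subseteq S$ \emph{upstairs} (which is indeed true for SNC divisors). Downstairs, $f(Z)\subseteq f(S)$ can easily hold even when $Z\not\subseteq S$: in the extreme but entirely relevant case $X=\operatorname{Spec}\mathbb C$, every $f$-image is the point, statement (i) becomes vacuous, and $\delta=0$ is then precisely the injectivity of $H^{q+1}(Y,\mathcal O_Y(L-S))\to H^{q+1}(Y,\mathcal O_Y(L))$ --- which is the genuinely hard, Hodge-theoretic content of the whole circle of theorems, not something recoverable from torsion-freeness. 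So the induction does not close; the ``principal obstacle'' you name is not a technical point to be executed but the actual theorem in disguise. (Your other bookkeeping --- adjunction on $S$, the decomposition of strata of $(Y,B)$ into those of $(Y,B')$ and of $(S,B'|_S)$, and the availability of the klt SNC base case --- is fine.)

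For comparison: the paper does not prove Theorem \ref{thm43} at all; it cites \cite{fujino} and \cite[Chapter 2]{book}. The proof there runs in the opposite logical direction from yours. One first proves an Esnault--Viehweg-type \emph{injectivity} theorem for the pair $(Y,B)$, using the $E_1$-degeneration of a Hodge-to-de Rham spectral sequence for the mixed Hodge structure on the cohomology of the complement of the SNC locus (after a cyclic cover absorbing $\{B\}$). Statements (i) and (ii) are then deduced from injectivity by a standard dévissage (restricting to general members of free linear systems and localizing on $X$); in particular the vanishing of connecting homomorphisms such as your $\delta$ is a \emph{consequence} of the injectivity theorem, never an input obtained from support considerations. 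A correct version of your strategy would therefore have to strengthen the inductive hypothesis to include the injectivity statement itself, at which point one is essentially reproducing the argument of \cite{fujino}. Note also that the support argument you propose is the one the paper legitimately uses later (in the proof of Theorem \ref{thm5}) to kill a connecting map, but there it works only because the relevant union $W$ of $f$-images is chosen so that no stratum of the complementary boundary maps into it --- a hypothesis you cannot arrange in the generality of Theorem \ref{thm43}.
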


The proof of Theorem \ref{thm43} is not difficult. 
For a short and almost self-contained 
proof, see \cite{fujino}. 
See also \cite[Chapter 2]{book} for a thorough treatment. 
As an application of Theorem \ref{thm43}, 
we prepare the following powerful vanishing theorem. 
It will play basic roles for the study of 
log canonical pairs. 

\begin{thm}[{cf.~\cite[Theorem 4.4]{ambro}}]\label{thm5} 
Let $X$ be a normal projective 
variety and $B$ a boundary $\mathbb Q$-divisor 
on $X$ such that 
$(X, B)$ is log canonical. 
Let $D$ be a Cartier divisor on $X$. 
Assume that $D-(K_X+B)$ is ample. 
Let $\{C_i\}$ be {\em{any}} set of 
lc centers of the pair $(X, B)$. 
We put $W=\bigcup C_i$ with the reduced scheme structure. 
Then we have 
$$
H^i(X, \mathcal I_W\otimes \mathcal O_X(D))=0,  
\ \ H^i(X, \mathcal O_X(D))=0,  
$$
and 
$$
H^i(W, \mathcal O_W(D))=0   
$$
for every $i>0$, where $\mathcal I_W$ is the defining 
ideal sheaf of $W$ on $X$. 
In particular, the restriction 
map 
$$
H^0(X, \mathcal O_X(D))\to H^0(W, \mathcal O_W(D))
$$ 
is surjective. 
\end{thm}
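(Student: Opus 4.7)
The plan is to reduce the statement to an application of Theorem \ref{thm43} on a good log resolution. First I would take a log resolution $f\colon Y\to X$ of $(X,B)$, chosen sufficiently fine that every lc center $C_i$ is the $f$-image of some divisorial component of $B_Y^{=1}$, where $B_Y$ is defined by $K_Y+B_Y=f^*(K_X+B)$. Decompose $B_Y=B_Y^+-F$, with $B_Y^+\geq 0$ a boundary with simple normal crossing support and $F\geq 0$; the log canonical hypothesis forces $F$ to be $f$-exceptional (only exceptional divisors can have positive discrepancy). Let $T$ be the sum of those components of $B_Y^{+,=1}$ whose images lie in $W$, so that $f(T)=W$ set-theoretically by our choice of resolution.

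Next consider the Cartier divisor $L=f^*D+\ulcorner F\urcorner-T$. A direct substitution using $K_Y+B_Y=f^*(K_X+B)$ and $B_Y^+=B_Y+F$ gives
$$L-(K_Y+B')=f^*(D-(K_X+B)),$$
where $B'=(\ulcorner F\urcorner-F)+B_Y^{+,<1}+(B_Y^{+,=1}-T)$. The three summands have coefficients in $[0,1]$ and are supported on pairwise disjoint components, so $B'$ is a boundary $\mathbb Q$-divisor with SNC support. Since $D-(K_X+B)$ is ample, Theorem \ref{thm43}(ii) yields
$$H^p(X,R^qf_*\mathcal O_Y(L))=0\quad\text{for all }p>0,\ q\geq 0,$$
and in particular $H^i(X,f_*\mathcal O_Y(L))=0$ for every $i>0$.

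The key remaining point is the identification $f_*\mathcal O_Y(L)=\mathcal I_W\otimes\mathcal O_X(D)$. By the projection formula this reduces to $f_*\mathcal O_Y(\ulcorner F\urcorner-T)=\mathcal I_W$. Since $X$ is normal and $\ulcorner F\urcorner$ is effective and $f$-exceptional, a local section of the left-hand side over $U\subseteq X$ is a regular function on $U$ whose pullback to $f^{-1}(U)$ vanishes along each component of $T$; as $f(T)=W$ and $W$ is reduced this cuts out exactly $\mathcal I_W|_U$, and the reverse inclusion is clear. Combined with the preceding vanishing this gives $H^i(X,\mathcal I_W\otimes\mathcal O_X(D))=0$ for $i>0$. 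Taking $W=\emptyset$ (so $T=0$) in the same argument yields $H^i(X,\mathcal O_X(D))=0$, and the long exact sequence associated to
$$0\to\mathcal I_W\otimes\mathcal O_X(D)\to\mathcal O_X(D)\to\mathcal O_W(D)\to 0$$
delivers both $H^i(W,\mathcal O_W(D))=0$ for $i>0$ and the surjectivity of the restriction map.

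The main obstacle I expect is the two claims underlying the identification $f_*\mathcal O_Y(\ulcorner F\urcorner-T)=\mathcal I_W$. Ensuring $f(T)=W$ requires that every $C_i$ be realized on $Y$ as the image of a component of $B_Y^{=1}$, which forces a careful (but standard) choice of log resolution extracting a divisor over each lc center. For the sheaf-theoretic equality itself, the inclusion $\mathcal I_W\subseteq f_*\mathcal O_Y(\ulcorner F\urcorner-T)$ is immediate, while the reverse inclusion rests on normality of $X$ (so that a function regular outside an exceptional set extends regularly) together with reducedness of $W$ (to identify functions vanishing set-theoretically on $W$ with sections of $\mathcal I_W$).
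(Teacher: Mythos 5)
Your proposal is correct, and its skeleton coincides with the paper's: your $\ulcorner F\urcorner$ is exactly the paper's $A=\ulcorner -(B_Y^{<1})\urcorner$, your boundary $B'$ equals the paper's $\{B_Y\}+B_Y^{=1}-T$, and the application of Theorem \ref{thm43}(ii) to $L=f^*D+A-T$ followed by the long exact sequence at the end is identical. The one genuine divergence is how the identification $f_*\mathcal O_Y(A-T)\simeq \mathcal I_W$ is obtained. You prove it by a direct valuation-theoretic computation: since the components of $T$ (coefficient $1$ in $B_Y$) are disjoint from those of $F$ (negative coefficients), a section of $f_*\mathcal O_Y(A-T)$ is a regular function on $X$ (by normality and the exceptionality of $A$) whose pullback vanishes along every component of $T$, i.e., which vanishes on $f(T)=W$; with $W$ reduced this is precisely $\mathcal I_W$. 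This is elementary and correct, granted the standard refinement of the resolution so that every $C_i$ is the image of a component of $B_Y^{=1}$ --- a point you rightly flag, and which the paper needs just as much. The paper instead pushes the exact sequence $0\to\mathcal O_Y(A-T)\to\mathcal O_Y(A)\to\mathcal O_T(A)\to 0$ through $f_*$ and invokes Theorem \ref{thm43}(i) to kill the connecting map $\delta$. What that heavier route buys is Lemma \ref{23}, namely $f_*\mathcal O_T(A)\simeq\mathcal O_W$ (connectedness of the fibers of $T\to W$), which is not needed for Theorem \ref{thm5} itself but is the engine behind Theorem \ref{2424} (seminormality, intersections of lc centers, normality of minimal lc centers). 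So your argument is a legitimate and slightly more economical proof of the stated vanishing theorem, but it does not reproduce the byproduct that the paper relies on later.
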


\begin{proof}
Let $f:Y\to X$ be a resolution 
such that $\Supp f^{-1}_*B\cup \Exc (f)$, 
where $\Exc (f)$ is the exceptional locus of 
$f$, is a simple 
normal crossing 
divisor. 
We can further assume that 
$f^{-1}(W)$ is a simple normal crossing 
divisor on $Y$. We can write 
$$
K_Y+B_Y=f^*(K_X+B). 
$$ 
Let $T$ be the union of the irreducible 
components of $B^{=1}_Y$ that 
are mapped into $W$ by $f$. 
We consider 
the following 
short exact sequence 
$$
0\to \mathcal O_Y(A-T)\to \mathcal O_Y(A)\to \mathcal 
O_T(A)\to 0, 
$$ 
where $A=\ulcorner -(B^{<1}_Y)\urcorner$. 
Note that $A$ is an effective 
$f$-exceptional 
divisor. 
We obtain the following long exact sequence 
\begin{align*}
0&\to f_*\mathcal O_Y(A-T)\to f_*\mathcal O_Y(A)
\to f_*\mathcal O_T(A)\\ 
&\overset{\delta}\to R^1f_*\mathcal O_Y(A-T)\to \cdots.  
\end{align*}
Since 
$$
A-T-(K_Y+\{B_Y\}+B^{=1}_Y-T)=-(K_Y+B_Y)\sim_{\mathbb Q}
-f^*(K_X+B), 
$$ 
every non-zero local section of $R^1f_*\mathcal O_Y(A-T)$ contains 
in its support the 
$f$-image of some stratum of $(Y, \{B_Y\}+B^{=1}_Y-T)$ by 
Theorem \ref{thm43} (i). 
On the other hand, $W=f(T)$. Therefore, 
the connecting homomorphism $\delta$ is the zero map. 
Thus, we have a short 
exact sequence 
$$
0\to f_*\mathcal O_Y(A-T)\to \mathcal O_X\to 
f_*\mathcal O_T(A)\to 0. 
$$
So, we obtain $f_*\mathcal O_T(A)\simeq \mathcal O_W$ and 
$f_*\mathcal O_Y(A-T)\simeq \mathcal I_W$, the 
defining ideal sheaf of $W$. 
The isomorphism $f_*\mathcal O_T(A)\simeq \mathcal O_W$ 
plays crucial roles in this paper. 
Thus we write it as a lemma. 
\begin{lem}\label{23}
We have $f_*\mathcal O_T(A)\simeq \mathcal O_W$. 
It obviously implies that $f_*\mathcal O_T\simeq \mathcal O_W$. 
\end{lem}
Since 
$$
f^*D+A-T-(K_Y+\{B_Y\}+B^{=1}_Y-T)\sim_{\mathbb Q}
f^*(D-(K_X+B)), 
$$ 
and 
$$
f^*D+A-(K_Y+\{B_Y\}+B^{=1}_Y)\sim_{\mathbb Q}
f^*(D-(K_X+B)), 
$$
we have 
$$
H^i(X, \mathcal I_W\otimes \mathcal O_X(D))\simeq 
H^i(X, f_*\mathcal O_Y(A-T)\otimes \mathcal O_X(D))=0 
$$ 
and 
$$
H^i(X, \mathcal O_X(D))\simeq 
H^i(X, f_*\mathcal O_Y(A)\otimes \mathcal O_X(D))=0 
$$ 
for every $i>0$ by Theorem \ref{thm43} (ii). 
By the long exact sequence 
\begin{align*}
\cdots \to H^i(X, \mathcal O_X(D))&\to 
H^i(W, \mathcal O_W(D))\\ &\to H^{i+1}(X, \mathcal I_W\otimes 
\mathcal O_X(D))\to \cdots, 
\end{align*}
we have $H^i(W, \mathcal O_W(D))=0$ for 
every $i>0$. 
We finish the proof. 
\end{proof}

As a corollary, we can easily check the following 
result (cf.~\cite[Propositions 4.7 and 4.8]{ambro}). 

\begin{thm}\label{2424}
Let $X$ be a normal projective variety and $B$ an effective 
$\mathbb Q$-divisor 
such that 
$(X, B)$ is log canonical. 
Then we have the following properties. 
\begin{itemize}
\item[(1)] $(X, B)$ has at most finitely many lc centers. 
\item[(2)] An intersection of two lc centers is 
a union of lc centers. 
\item[(3)] Any union of lc centers of $(X, B)$ is semi-normal. 
\item[(4)] Let $x\in X$ be a closed point such that 
$(X, B)$ is lc but not klt at $x$. Then 
there is a unique minimal lc center $W_x$ passing through 
$x$, and $W_x$ is normal at $x$. 
\end{itemize}
\end{thm}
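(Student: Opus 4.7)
The plan hinges on Lemma~\ref{23} (which gives $f_*\mathcal{O}_T\simeq\mathcal{O}_W$ for any union $W$ of lc centers on a log resolution $f:Y\to X$) and the vanishing in Theorem~\ref{thm43}. I will fix once and for all a log resolution $f:Y\to X$ with $K_Y+B_Y=f^*(K_X+B)$, so that $B_Y^{=1}=T_1+\cdots+T_r$ is simple normal crossing, and handle (1) and (3) first, then (2) and (4) through a common key claim.

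For (1), I would show that every lc center of $(X,B)$ equals the $f$-image of some stratum of $B_Y^{=1}$: any prime divisor $E$ over $X$ with $a(E,X,B)=-1$ is, on a further log resolution, dominated by a stratum of $B_Y^{=1}$ (a direct discrepancy computation using log canonicity), so $c_X(E)$ already equals the $f$-image of that stratum. Finiteness is then immediate. For (3), given a union $W$ of lc centers, Lemma~\ref{23} identifies $f_*\mathcal{O}_T$ with $\mathcal{O}_W$, where $T$ is SNC and therefore semi-normal; the canonical factorization $T\to W^{sn}\to W$ gives $\mathcal{O}_W\subseteq\mathcal{O}_{W^{sn}}\subseteq f_*\mathcal{O}_T=\mathcal{O}_W$, forcing $W^{sn}=W$.

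The crux of both (2) and (4) is the following \emph{Key Claim}: if $C_1,C_2$ are lc centers through a common point $x$, then some lc center contained in $C_1\cap C_2$ also contains $x$. To prove it, let $T_i\subseteq B_Y^{=1}$ be the union of strata mapping into $C_i$. Applying Lemma~\ref{23} to $W=C_1\cup C_2$ with $T=T_1\cup T_2$, the morphism $f|_T:T\to W$ satisfies $f_*\mathcal{O}_T=\mathcal{O}_W$, so its fibres are connected by Stein factorization. Since both $T_1$ and $T_2$ meet $f^{-1}(x)$, connectedness of $f^{-1}(x)\cap T$ forces $T_1\cap T_2$ to meet $f^{-1}(x)$ as well; the image of any stratum of $T_1\cap T_2$ through a point over $x$ is then an lc center inside $C_1\cap C_2$ containing $x$. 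Part (2) follows by writing
\[
C_1\cap C_2=\bigcup_{x\in C_1\cap C_2}W_x,
\]
a finite union by (1). Existence and uniqueness of the minimal lc center $W_x$ in (4) follow from the Key Claim by descending induction on dimension among the finitely many lc centers through $x$.

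The main obstacle is the normality of $W_x$ at $x$. I plan to use a tie-breaking step: replace $B$ by $B+\varepsilon(H-cM)$, where $H$ is a general ample $\mathbb{Q}$-divisor, $M$ is a $\mathbb{Q}$-divisor chosen to raise the discrepancies of every stratum lying over $W_x$ except a preferred one, and $c$ is the log canonical threshold. After this perturbation a single smooth stratum $S\subseteq B_Y^{=1}$ lies over $W_x$ with $f(S)=W_x$; Lemma~\ref{23} applied to the singleton $\{W_x\}$ gives $(f|_S)_*\mathcal{O}_S=\mathcal{O}_{W_x}$, and the fibre of $f|_S$ above $x$ is connected. Combined with the semi-normality from (3) and Zariski's main theorem, this forces $W_x$ to be normal at $x$. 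The delicate part is the construction of the tie-breaking perturbation and the verification that after perturbing, $W_x$ remains the unique minimal lc center through $x$ with a single stratum lying over it.
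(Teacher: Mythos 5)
Your treatment of (1), (2), (3) and of the existence/uniqueness part of (4) is essentially the paper's argument (the paper leaves (1) and (3) as ``obvious,'' and your Key Claim is exactly the connectedness argument the paper runs for (2): $f_*\mathcal O_T\simeq\mathcal O_W$ for $W=C_1\cup C_2$ forces two components $T_1$, $T_2$ of $T$ mapping into $C_1$, $C_2$ respectively to meet over $P$). One small remark: in (2) you do not need the \emph{minimal} center $W_x$ through each point, only \emph{some} lc center $C_x$ with $x\in C_x\subset C_1\cap C_2$, which is what the Key Claim already gives; invoking $W_x$ there quietly presupposes part of (4).

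The genuine gap is the normality of $W_x$ at $x$. Your tie-breaking plan is left unexecuted at exactly its critical point: you have not constructed the perturbation, and as written $B+\varepsilon(H-cM)$ need not be an effective boundary, so Lemma~\ref{23} (which is proved only for an lc pair with effective $B$) need not apply to the perturbed pair; you also have not verified that $W_x$ survives as an lc center of the new pair with a single component of $B_Y^{=1}$ over it. None of this is needed. The paper's argument is direct: take $W=W_x$ and $T$ the union of components of $B_Y^{=1}$ mapping into $W$. Every stratum of $T$ maps onto an lc center contained in $W$, and by minimality of $W_x$ no lc center properly contained in $W$ passes through $x$; hence, after shrinking $W$ to a neighborhood of $x$, every stratum of $T$ dominates $W$. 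Each component of $T$ is smooth, so $f|_T$ factors through the normalization $W^\nu\to W$ (the factorizations glue because all intersections of components also dominate $W$), and then
$$
\mathcal O_W\subseteq \nu_*\mathcal O_{W^\nu}\subseteq f_*\mathcal O_T=\mathcal O_W
$$
forces $W^\nu\to W$ to be an isomorphism near $x$. I recommend you replace the tie-breaking step by this argument; alternatively, if you insist on tie-breaking, you must actually produce the perturbed boundary, check it is effective and lc, and re-prove Lemma~\ref{23} in that setting before the Stein factorization step can be invoked.
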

\begin{proof} 
We use the notation in the proof of Theorem \ref{thm5}. 
(1) is obvious. 
(3) is also obvious by Lemma \ref{23} since $T$ is a simple normal crossing 
divisor. 
Let $C_1$ and $C_2$ be two lc centers of $(X, B)$. 
We fix a closed point $P\in C_1\cap C_2$. 
For the 
proof of (2), it is enough to find an lc center $C$ such that $P\in C\subset C_1\cap C_2$. 
We put 
$W=C_1\cup C_2$. 
By Lemma \ref{23}, 
we obtain $f_*\mathcal O_T\simeq \mathcal O_W$. This means 
that $f:T\to W$ has connected fibers. 
We note that $T$ is a simple normal crossing divisor on $Y$. 
Thus, there exist irreducible 
components $T_1$ and $T_2$ of $T$ such that  
$T_1\cap T_2\cap f^{-1}(P)\ne \emptyset$ and that 
$f(T_i)\subset C_i$ for $i=1, 2$. 
Therefore, we can find an lc center $C$ with 
$P\in C\subset C_1\cap C_2$. 
We finish the proof of (2). 
Finally, we will prove (4). 
The existence and the uniqueness of 
the minimal lc center follow from (2). 
We take the unique minimal lc center $W=W_x$ passing through 
$x$. By Lemma \ref{23}, 
we have $f_*\mathcal O_T\simeq \mathcal O_W$. 
By shrinking $W$ around $x$, we can assume that 
every stratum of $T$ dominates $W$. 
Thus, $f:T\to W$ factors through 
the normalization $W^{\nu}$ of $W$. Since 
$f_*\mathcal O_T\simeq \mathcal O_W$, we obtain 
that $W^{\nu}\to W$ is an isomorphism. So, we 
obtain (4). 
\end{proof}

We close this section with the following 
very useful vanishing theorem. 
It is a special case of \cite[Theorem 4.4]{ambro}. 
For details, see \cite[Theorem 3.39]{book}. 
Here, we give a quick reduction to Theorem \ref{thm43} (ii) 
by using \cite{bchm} for the reader's convenience.   

\begin{thm}\label{key-vani}
Let $(X, B)$ be a projective lc pair and 
$W$ a minimal lc center of $(X, B)$. 
Let $D$ be a Cartier divisor on $W$ such that 
$D-(K_X+B)|_W$ is ample. 
Then $H^i(W, \mathcal O_W(D))=0$ for every $i>0$. 
\end{thm}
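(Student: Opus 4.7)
The plan is to equip the minimal lc center $W$ with a klt pair structure via subadjunction, and then deduce the vanishing from Theorem \ref{thm43}(ii) applied on a log resolution of that klt pair.

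First, by Theorem \ref{2424}(4), $W$ is normal, so $(K_X+B)|_W$ is a well-defined $\mathbb Q$-Cartier divisor on $W$.

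The central step is to construct an effective $\mathbb Q$-divisor $B_W$ on $W$ such that $(W, B_W)$ is klt and $K_W+B_W\sim_{\mathbb Q}(K_X+B)|_W$. This is (a form of) Kawamata's subadjunction for minimal lc centers, and it is at this point that BCHM \cite{bchm} enters the argument in an essential way. Briefly, one takes a log resolution $f\colon Y\to X$ of $(X,B)$, lets $T\subset B_Y^{=1}$ be the union of components dominating $W$ as in the proof of Theorem \ref{thm5}, and runs an appropriate MMP (whose existence is guaranteed by BCHM) producing a $\mathbb Q$-factorial dlt model $\pi\colon X'\to X$ with a unique divisorial lc place $S$ over $W$. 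Adjunction on $S$ then yields a klt pair $(S, B_S)$ whose induced morphism $S\to W$ is birational with connected fibers (since $f_*\mathcal O_T\simeq \mathcal O_W$ by Lemma \ref{23}); finite generation, again provided by BCHM, lets one push $B_S$ down to $W$ and produce the desired $B_W$.

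With $(W, B_W)$ klt and $D-(K_W+B_W)\sim_{\mathbb Q} D-(K_X+B)|_W$ ample, take a log resolution $g\colon V\to W$ of $(W,B_W)$ and write $K_V+B_V=g^*(K_W+B_W)$. Since $B_W$ is klt we have $B_V^{=1}=0$. Setting $L=g^*D+\ulcorner -B_V^{<1}\urcorner$, the divisor $L-(K_V+\{B_V\})\sim_{\mathbb Q} g^*(D-(K_W+B_W))$ is the pullback of an ample $\mathbb Q$-divisor on $W$. Noting that $g_*\mathcal O_V(\ulcorner -B_V^{<1}\urcorner)\simeq \mathcal O_W$ in the klt setting, Theorem \ref{thm43}(ii) yields $H^i(W, \mathcal O_W(D))\simeq H^i(W, g_*\mathcal O_V(L))=0$ for every $i>0$.

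The main obstacle is the subadjunction step producing $B_W$: this is precisely where BCHM is indispensable, providing both the dlt modification that extracts a single divisorial place over $W$ and the finite generation needed to descend the klt boundary from that place down to $W$. Without BCHM one would instead have to work directly with the reducible snc divisor $T$ via Ambro's vanishing for reducible varieties, i.e., exactly the quasi-log machinery that this paper is seeking to bypass.
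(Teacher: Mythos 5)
Your strategy is genuinely different from the paper's, but its central step contains a real gap. You assert that one can construct an effective $\mathbb Q$-divisor $B_W$ with $(W,B_W)$ klt and $K_W+B_W\sim_{\mathbb Q}(K_X+B)|_W$ \emph{exactly}. That is not what Kawamata's subadjunction gives: after extracting a divisorial lc place $S$ over $W$, the induced map $S\to W$ is a \emph{fibration} of relative dimension $\dim X-1-\dim W$ (not a birational morphism with connected fibers, as you claim --- $S$ is a divisor on a variety birational to $X$, so connectedness of fibers via Lemma \ref{23} does not make $S\to W$ birational), and one must apply the canonical bundle formula to this fibration. The resulting decomposition $(K_X+B)|_W\sim_{\mathbb Q}K_W+B_W^{\mathrm{div}}+M_W$ has a discriminant part $B_W^{\mathrm{div}}$ and a moduli part $M_W$ that is only known to be nef; to absorb $M_W$ into a klt boundary one must add $\varepsilon H|_W$ for an ample $H$ and $\varepsilon>0$. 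So the statement available to you is: for every $\varepsilon>0$ there is $B_W\geq 0$ with $(W,B_W)$ klt and $K_W+B_W\sim_{\mathbb R}(K_X+B+\varepsilon H)|_W$. The exact version you use is essentially the effective (b-)semi-ampleness of the moduli part and is not a theorem you can invoke. Also, adjunction of a dlt pair to a component $S$ of the coefficient-one part yields a dlt pair, not a klt one, so the intermediate claim that $(S,B_S)$ is klt is likewise unjustified. The gap is repairable for the present purpose: since $D-(K_X+B)|_W$ is ample and ampleness is an open condition, choose $\varepsilon$ small enough that $D-(K_X+B)|_W-\varepsilon H|_W$ remains ample and run your final vanishing argument against $(W,B_W)$ with $D-(K_W+B_W)$ ample. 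With that modification the last paragraph of your argument (log resolution, $B_V^{=1}=0$, $g_*\mathcal O_V(\ulcorner -B_V\urcorner)\simeq\mathcal O_W$, Theorem \ref{thm43}(ii)) is correct.

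For comparison, the paper avoids subadjunction entirely, and this is the point of having Theorem \ref{thm43}(ii) in the form stated: the boundary there is allowed to have coefficient-one components, so there is no need to manufacture a klt structure on $W$. The paper takes a dlt modification $f:Y\to X$ (this is where \cite{bchm} enters, via \cite{ko-ko}), restricts to an lc center $V$ of $(Y,B_Y)$ lying over $W$ --- which may have $\dim V>\dim W$ --- resolves $V$, applies Theorem \ref{thm43}(ii) keeping $B_Z^{=1}$ in the boundary, and extracts $H^i(W,\mathcal O_W(D))=0$ because $\mathcal O_W(D)$ is a direct summand of $f_*\mathcal O_V(f^*D)$ by normality of $W$. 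This is both shorter and logically lighter: it needs none of the canonical bundle formula, whereas your route imports the hardest part of Kawamata's subadjunction. If you want to keep your approach, state and use the $\varepsilon$-perturbed subadjunction correctly and fix the description of $S\to W$.
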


\begin{proof}
By Hacon (cf.~\cite[Theorem 3.1]{ko-ko}), 
we can make a projective 
birational morphism $f:Y\to X$ such that 
$K_Y+B_Y=f^*(K_X+B)$ and that $(Y, B_Y)$ is dlt. 
It is an application of the results in \cite{bchm}. 
For the definition and the basic properties 
of {\em{dlt}} pairs, see \cite[Section 2.3]{km} 
and \cite{what}. 
We take an lc center $V$ of $(Y, B_Y)$ such that 
$f(V)=W$ and put $K_V+B_V=(K_Y+B_Y)|_V$. 
Then $(V, B_V)$ is dlt (cf.~\cite[Section 3.9]{what}) and 
$K_V+B_V\sim _{\mathbb Q}f^*((K_X+B)|_W)$. 
Let $g:Z\to V$ be a resolution such that 
$K_Z+B_Z=g^*(K_V+B_V)$ and that $\Supp B_Z$ is simple normal 
crossing. Then we have 
$K_Z+B_Z\sim _{\mathbb Q}h^*((K_X+B)|_W)$, where 
$h=f\circ g$. 
Since 
$$h^*(D-(K_X+B)|_W)\sim _{\mathbb Q}h^*D+\ulcorner 
-(B^{<1}_Z)\urcorner-(K_Z+B^{=1}_Z+\{B_Z\}), $$
we obtain 
$$H^i(W, h_*\mathcal O_Z(h^*D+\ulcorner -(B^{<1}_Z)\urcorner))=0$$ for 
every $i>0$ by Theorem \ref{thm43} (ii). 
We note that 
$$h_*\mathcal O_Z(h^*D+\ulcorner -(B^{<1}_Z)\urcorner)\simeq 
f_*\mathcal O_V(f^*D)$$ by the projection formula since 
$\ulcorner -(B^{<1}_Z)\urcorner$ is effective and $g$-exceptional. 
We note that 
$\mathcal O_W(D)$ is a direct summand of $f_*\mathcal O_V(f^*D)\simeq 
\mathcal O_W(D)\otimes f_*\mathcal O_V$ since $W$ is normal 
(cf.~Theorem \ref{2424} (4)). 
Therefore, we have $H^i(W, \mathcal O_W(D))=0$ for every $i>0$. 
\end{proof}

\begin{rem}
We can prove Theorem \ref{key-vani} without 
using \cite{bchm}. 
For the original argument, see \cite[Theorem 4.4]{ambro} 
and \cite[Theorem 3.39]{book}. It depends on 
the theory of mixed Hodge structures (cf.~\cite[Chapter 2]{book}). 
\end{rem}

\section{Non-vanishing theorem}\label{sec3}

In this section, we prove the non-vanishing theorem, 
which is the main theorem of this paper. 
The proof given here is very easy. 

\begin{proof}[{Proof of {\em{Theorem \ref{31}}}}]
Let $W$ be a minimal 
lc center of $(X, B)$. 
If $L|_W$ is numerically trivial, 
then we have 
$$
h^0(W, \mathcal O_W(L))=\chi 
(W, \mathcal O_W(L))=\chi (W, \mathcal O_W)=h^0(W, \mathcal O_W)=1$$ 
by 
\cite[Chapter II \S2 Theorem 1]{kleiman} 
and the vanishing theorem (see Theorem \ref{key-vani}). 
Therefore, 
$L|_W$ is linearly trivial since 
$L|_W$ is numerically trivial. In particular, $|mL|_W|$ is free for 
every $m>0$. 
On the other hand, 
$$
H^0(X, \mathcal O_X(mL))\to H^0(W, \mathcal O_W(mL))
$$
is surjective for every $m\geq a$ by Theorem \ref{thm5}. 
Thus, $\Bs |mL|$ does not contain $W$ for every $m\geq a$.  

Assume that 
$L|_W$ is not numerically trivial. Let $x\in W$ be a general smooth point. 
If $l$ is a sufficiently large integer, 
then we can find an effective Cartier divisor 
$N$ on $W$ such that $N\sim b(lL-(K_X+B))|_W$ with 
$\mult _x N>b\dim W$ for some 
positive divisible integer $b$ by Shokurov's concentration method. 
See, for example, \cite[3.5 Step 2]{km}. 
If $b$ is sufficiently large and divisible, 
then $\mathcal I_W\otimes \mathcal O_X(b(lL-(K_X+B)))$ 
is generated by global sections and 
$H^1(X, \mathcal I_W\otimes \mathcal O_X(b(lL-(K_X+B))))=0$ 
since $lL-(K_X+B)$ is ample, where $\mathcal I_W$ is the defining 
ideal sheaf of $W$ on $X$. 
By using the following short exact sequence 
\begin{align*}
0&\to H^0(X, \mathcal I_W\otimes \mathcal O_X(b(lL-(K_X+B))))\\
&\to H^0(X, \mathcal O_X(b(lL-(K_X+B))))\\ &\to 
H^0(W, \mathcal O_W(b(lL-(K_X+B))))\to 0, 
\end{align*} 
we can find an effective $\mathbb Q$-divisor $M$ on $X$ with 
the following properties. 
\begin{itemize}
\item[(i)] $M|_W$ is an effective $\mathbb Q$-divisor 
such that $\mult _x M|_W>\dim W$. 
\item[(ii)] $M\sim _{\mathbb Q}lL-(K_X+B)$ for some 
positive large integer $l$. 
\item[(iii)] $(X, B+M)$ is lc outside $W$. 
\end{itemize} 
We take the log canonical threshold $c$ of $(X, B)$ with 
respect to $M$. 
Then $(X, B+cM)$ is lc but not klt. 
By the above construction, we have $0<c<1$.  
We replace $(X, B)$ with $(X, B+cM)$, 
$a$ with $a-ac+cl$. 
Then we have that 
$$
(a-ac+cl)L-(K_X+B+cM)\sim _{\mathbb Q}(1-c)(aL-(K_X+B)) 
$$ 
is ample. Moreover, 
we can find a smaller lc center $W'$ of $(X, B+cM)$ contained 
in $W$. By repeating 
this process, we reach the situation where 
$L|_W$ is numerically trivial. 

Anyway, we proved that $\Bs|mL|$ contains no lc centers 
of $(X, B)$ for every $m\gg 0$. 
\end{proof}

\section{Base point free theorem}\label{sec4}

We give a proof of Theorem \ref{main1}. 
Our proof is much easier than Ambro's proof 
for quasi-log varieties. 

\begin{proof}[{Proof of {\em{Theorem \ref{main1}}}}]
If $L$ is numerically trivial, 
then 
$$h^0(X, \mathcal O_X(\pm L))=\chi (X, \mathcal O_X(\pm L))
=\chi (X, \mathcal O_X)=h^0(X, \mathcal O_X)=1$$ by 
\cite[Chapter II \S2 Theorem 1]{kleiman} and 
the vanishing theorem (cf.~Theorem \ref{thm5}). 
Thus, $L$ is linearly trivial. 
In this case, $|mL|$ is free for every $m$. 
So, from now on, we can assume that 
$L$ is not numerically trivial. 

We assume that 
$(X, B)$ is klt. 
Let $x\in X$ be a general smooth point. 
Then we can find an effective 
$\mathbb Q$-divisor 
$M$ on $X$ such that 
$$
M\sim _{\mathbb Q}lL-(K_X+B)
$$ 
for some large integer $l$ and 
that $\mult _x M>n=\dim X$. 
It is well known as Shokurov's concentration method. 
See, for example, \cite[3.5 Step 2]{km}.   
Let $c$ be the log canonical threshold 
of $(X, B)$ with respect to $M$. 
By construction, 
we have $0<c<1$. 
Then 
$$
(a-ac+cl)L-(K_X+B+cM)\sim _{\mathbb Q}(1-c)(aL-(K_X+B)) 
$$ 
is ample. 
Therefore, by replacing $B$ with 
$B+cM$, 
$a$ with $a-ac+cl$, we 
can assume that $(X, B)$ is lc but not klt. 

From now on, we assume that $(X, B)$ is lc but not klt and 
that $L$ is not numerically trivial.  
By Theorem \ref{31}, 
we can take general members $D_1, \cdots, D_{n+1}\in |p^{m_1}L|$ for some 
prime integer $p$ and a positive integer $m_1$. 
Since $D_1, \cdots, D_{n+1}$ are general, 
$(X, B+D_1+\cdots +D_{n+1})$ is lc outside $\Bs|p^{m_1}L|$. 
It is easy to see that 
$(X, B+D)$, where $D=D_1+\cdots +D_{n+1}$, is not lc 
at the generic point of any irreducible 
component of $\Bs|p^{m_1}L|$. Let $c$ 
be the log canonical threshold 
of $(X, B)$ with 
respect to $D$. 
Then $(X, B+cD)$ is lc but not klt, and $0<c<1$. 
We note that 
$$
(c(n+1)p^{m_1}+a)L-(K_X+B+cD)\sim _{\mathbb Q}aL-(K_X+B)
$$ 
is ample. 
By construction, there 
exists an lc center of $(X, B+cD)$ contained 
in $\Bs|p^{m_1}L|$. By Theorem \ref{31}, 
we can find $m_2>m_1$ such 
that 
$\Bs|p^{m_2}L|\subsetneq \Bs|p^{m_1}L|$. By 
noetherian induction, there exists $m_k$ such that 
$\Bs|p^{m_k}L|=\emptyset$. 
Let $p'$ be a prime integer such that 
$p'\ne p$. 
Then, by the same argument, we can prove $\Bs|p'^{m'_{k'}}L|=\emptyset$ for 
some positive integer $m'_{k'}$. 
So, there exists a positive number $m_0$ such that 
$|mL|$ is free for every $m\geq m_0$. 
\end{proof}

\section{Rationality theorem}\label{sec5}

Here, we prove the rationality theorem 
for log canonical pairs. 
Before we  start the proof, we recall the following lemmas. 
\begin{lem}[{cf.~\cite[Lemma 3.19]{km}}]\label{62}  
Let $P(x, y)$ be a non-trivial 
polynomial of degree $\leq n$ and 
assume that 
$P$ vanishes for all sufficiently large integral solutions of 
$0<ay-rx<\varepsilon$ for some fixed positive integer $a$ and 
positive $\varepsilon$ for some $r\in \mathbb R$. 
Then $r$ is rational, and 
in reduced form, $r$ has denominator $\leq 
a(n+1)/\varepsilon$. 
\end{lem}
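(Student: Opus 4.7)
The plan is to argue by contradiction. Assume either that $r$ is irrational, or that $r = u/v$ in lowest terms with $v > a(n+1)/\varepsilon$; in each case I will force $P \equiv 0$. The unifying strategy is to exhibit $n+1$ distinct lines on each of which $P$ vanishes identically. Since a polynomial of degree $\leq n$ vanishing on a line $\ell$ is divisible by the linear form defining $\ell$, and any $n+1$ pairwise non-proportional linear forms multiply to a polynomial of degree $n+1$, this contradicts $\deg P \leq n$.

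For the rational case, I rewrite the strip condition as $0 < avy - ux < v\varepsilon$. Since $\gcd(u,v) = 1$, one computes $\gcd(av, u) = \gcd(a, u) =: g \leq a$, so the integer values attained by $avy - ux$ are precisely the multiples of $g$. The hypothesis $v > a(n+1)/\varepsilon$ gives $v\varepsilon > a(n+1) \geq g(n+1)$, so the values $g, 2g, \ldots, (n+1)g$ all lie in $(0, v\varepsilon)$. For each such $k$, the line $L_k : avy - ux = k$ contains an infinite arithmetic progression of lattice points, almost all of which are sufficiently large; restricted to this progression, $P$ is a polynomial of degree $\leq n$ in the progression parameter vanishing at infinitely many values, hence identically on $L_k$. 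The $n+1$ parallel lines $L_{jg}$ thus supply $n+1$ pairwise coprime linear divisors $avy - ux - jg$ of $P$, the desired contradiction.

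In the irrational case, I use Dirichlet approximation (concretely, continued-fraction convergents of $r/a$) to produce infinitely many primitive integer vectors $(p_i, q_i)$ with $|aq_i - rp_i|$ arbitrarily small and pairwise distinct slopes $q_i/p_i$; by replacing $(p_i, q_i)$ by $(-p_i, -q_i)$ when necessary I arrange $0 < aq_i - rp_i < \delta$ for $\delta := \varepsilon/(n+2)$. Choose a base point $(x_0, y_0)$ with $x_0$ large and $0 < ay_0 - rx_0 < \delta$. For $i = 1, \ldots, n+1$ and $k = 0, 1, \ldots, n+1$ the integer point $(x_0 + kp_i, y_0 + kq_i)$ is sufficiently large and satisfies
$$0 < (ay_0 - rx_0) + k(aq_i - rp_i) < (n+2)\delta = \varepsilon,$$
so $P$ vanishes at $n+2$ points of the line $L_i$ through $(x_0, y_0)$ in direction $(p_i, q_i)$, forcing $P \equiv 0$ on $L_i$ (as a polynomial of degree $\leq n$ in $k$). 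The $n+1$ lines are concurrent at $(x_0, y_0)$ but of pairwise distinct slopes, so their defining linear forms are pairwise non-proportional -- contradiction.

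The main obstacle is the bookkeeping in the irrational case: simultaneously harvesting $n+1$ pairwise non-parallel primitive integer directions strictly inside the positive half-strip $\{0 < aq - rp < \delta\}$. This is handled by continued-fraction theory: convergents of $r/a$ produce infinitely many best approximations with strictly improving error and pairwise distinct slopes, whose error alternates in sign, so an infinite subsequence lies in the positive half from which any $n+1$ can be selected.
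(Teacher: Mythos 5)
Your argument is correct, and it is essentially the standard proof of \cite[Lemma 3.19]{km}, to which the paper simply defers without giving its own argument: in each case you force $n+1$ pairwise non-proportional linear factors into a nonzero polynomial of degree $\le n$. The only difference from Koll\'ar--Mori is organizational---they handle both cases at once via lines through the origin (the multiples $(kx',ky')$, $k=1,\dots,n+1$, of a single sufficiently large point with $0<ay'-rx'<\varepsilon/(n+1)$), whereas you use parallel lines in the rational case and concurrent lines through a large base point in the irrational case---but this changes nothing essential.
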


For the proof, see \cite[Lemma 3.19]{km}. 

\begin{lem}\label{63}  
Let $C$ be a projective variety and $D_1$, $D_2$ 
Cartier divisors on $X$. 
Consider the Hilbert polynomial 
$$
P(u_1, u_2)=\chi (C, \mathcal O_C(u_1D_1+u_2D_2)). 
$$ 
It is a polynomial in $u_1$ and $u_2$ of total 
degree $\leq \dim C$ {\em{(}}cf.~\cite[Theorem (Snapper)]{kleiman}{\em{)}}. 
If $D_1$ is ample, then 
$P(u_1, u_2)$ is 
a non-trivial polynomial. 
It is because $P(u_1, 0)=h^0(C, \mathcal O_C(u_1D_1))\ne 0$ 
if $u_1$ is sufficiently large. 
\end{lem}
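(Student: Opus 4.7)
The plan is to verify the two assertions separately, the polynomial structure and the non-triviality. For the first part, the assertion that $P(u_1, u_2) = \chi(C, \mathcal{O}_C(u_1 D_1 + u_2 D_2))$ is a numerical polynomial in $(u_1, u_2)$ of total degree at most $\dim C$ is exactly Snapper's theorem as stated in \cite[Theorem (Snapper)]{kleiman}, which the paper already cites; so I would simply quote that result and move on. No genuine work is needed here beyond noting that Snapper's statement is formulated for an arbitrary finite collection of Cartier divisors on a projective scheme, hence applies verbatim to our two-variable setting.

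For non-triviality, the idea is to specialize to the line $u_2 = 0$ and study
\[
P(u_1, 0) = \chi\bigl(C, \mathcal{O}_C(u_1 D_1)\bigr).
\]
First I would invoke Serre vanishing: since $D_1$ is ample, for all sufficiently large $u_1$ we have $H^i(C, \mathcal{O}_C(u_1 D_1)) = 0$ for every $i > 0$, and hence $P(u_1, 0) = h^0(C, \mathcal{O}_C(u_1 D_1))$. Next, ampleness of $D_1$ implies that some positive multiple $m D_1$ is very ample, so $h^0(C, \mathcal{O}_C(u_1 D_1))$ grows (to leading order) like a positive constant times $u_1^{\dim C}$ for $u_1$ along the arithmetic progression $m \mathbb{Z}_{>0}$; in particular it is non-zero for infinitely many $u_1$.

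From here, the conclusion is immediate: the one-variable polynomial $P(u_1, 0)$ takes non-zero values for infinitely many integers $u_1$, hence is not the zero polynomial, and therefore the two-variable polynomial $P(u_1, u_2)$ is non-trivial as well. I do not expect any real obstacle in this proof; the only point worth being careful about is that we are comparing a polynomial in $u_1$ with its values at integers, but a polynomial in one variable that vanishes at all sufficiently large integers must vanish identically, so the positivity of $h^0(C, \mathcal{O}_C(u_1 D_1))$ for $u_1 \gg 0$ (even along the subsequence $u_1 \in m\mathbb{Z}$) is more than enough to conclude. The proof is therefore just an assembly of Snapper's theorem, Serre vanishing, and the standard growth of global sections of an ample line bundle.
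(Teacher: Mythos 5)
Your proposal is correct and follows essentially the same route as the paper: the polynomial structure and degree bound are quoted from Snapper's theorem, and non-triviality comes from restricting to $u_2=0$, using Serre vanishing to identify $P(u_1,0)$ with $h^0(C,\mathcal O_C(u_1D_1))$, and noting this is non-zero for large $u_1$ by ampleness of $D_1$. You merely make explicit the two standard facts (Serre vanishing and the growth of sections of an ample line bundle) that the paper leaves implicit.
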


\begin{proof}[Proof of {\em{Theorem \ref{61}}}]
By using $mH$ with various large $m$ in place of $H$, we 
can assume that $H$ is very ample (cf.~\cite[3.4 Step 1]{km}). 
We put $\omega=K_X+B$ for simplicity. 
For each $(p, q)\in \mathbb Z^2$, let $L(p, q)$ denote the base 
locus of the linear system $|M(p, q)|$ on $X$ (with 
reduced scheme structure), 
where $M(p, q)=pH+qa\omega$. By 
definition, $L(p, q)=X$ if and only if $|M(p, q)|=\emptyset$. 
\setcounter{cla}{0}
\begin{cla}[{cf.~\cite[Claim 3.20]{km}}]\label{claim1}  
Let $\varepsilon$ be a positive number. 
For $(p, q)$ sufficiently 
large and $0<aq-rp<\varepsilon$, $L(p, q)$ is the same 
subset of $X$. 
We call this subset $L_0$. 
We let $I\subset \mathbb Z\times \mathbb Z$ be the set of $(p, q)$ for 
which $0<aq-rp<1$ and $L(p, q)=L_0$. 
We note that $I$ contains all sufficiently 
large $(p, q)$ with $0<aq-rp<1$. 
\end{cla}

For the proof, see \cite[Claim 3.20]{km}. 

\begin{cla}\label{claim2} 
We assume that $r$ is not rational or that 
$r$ is rational and has denominator $>a(n+1)$ in reduced form, 
where $n=\dim X$. 
Then, for $(p, q)$ sufficiently large and $0<aq-rp<1$, 
$\mathcal O_X(M(p, q))$ is generated by global sections 
at the generic point of any lc center 
of $(X, B)$. 
\end{cla}

\begin{proof}[Proof of {\em{Claim \ref{claim2}}}]
We note that $M(p, q)-\omega=pH+(qa-1)\omega$. If 
$aq-rp<1$ and $(p, q)$ is sufficiently large, then $M(p, q)-\omega$ is 
ample. Let $C$ be an lc center of $(X, B)$. 
Then $P_C(p, q)=\chi (C, \mathcal O_C(M(p, q)))$ is a non-trivial 
polynomial of degree 
at most $\dim C\leq \dim X$ by Lemma \ref{63}. 
By Lemma \ref{62}, there exists $(p, q)$ such that 
$P_C(p, q)\ne 0$ and 
that $(p, q)$ sufficiently large and $0<aq-rp<1$. 
By the ampleness of $M(p, q)-\omega$, $P_C(p, q)=
\chi (C, \mathcal O_C(M(p, q)))
=h^0(C, \mathcal O_C(M(p, q)))$ and 
$
H^0(X, \mathcal O_X(M(p, q)))\to H^0(C, \mathcal O_C(M(p, q)))
$ 
is surjective by Theorem \ref{thm5}. 
Therefore, $\mathcal O_X(M(p, q))$ 
is generated 
by global sections at the generic point of $C$. 
By combining this with Claim \ref{claim1}, $\mathcal O_X(M(p, q))$ is 
generated by 
global sections at the generic point of any 
lc center of $(X, B)$ if 
$(p, q)$ is sufficiently large with 
$0<aq-rp<1$. 
So, we obtain Claim \ref{claim2}. 
\end{proof}

Note 
that $\mathcal O_X(M(p, q))$ is not generated by 
global sections because $M(p, q)$ is not nef. 
Therefore, $L_0\ne \emptyset$. 
Let $D_1, \cdots, D_{n+1}$ 
be general members of $|M(p_0, q_0)|$ with 
$(p_0, q_0)\in I$. 
Then $K_X+B+\sum _{i=1}^{n+1}D_i$ 
is not lc at the generic point of any irreducible 
component of $L_0$ and is lc outside $L_0$. 
Let $c$ be the log canonical threshold of $(X, B)$ with 
respect to $D=\sum _{i=1}^{n+1}D_i$. 
Then, we have $0<c<1$ and 
that $K_X+B+cD$ is lc but not klt. 
Note that $c>0$ by Claim \ref{claim2}. 
Thus, the lc pair $(X, B+cD)$ has some 
lc centers contained in $L_0$. Let $C$ be an lc center 
contained in $L_0$. 
We consider $K_X+B+c D=\omega+c D 
\sim _{\mathbb Q}c(n+1)p_0H+(1+c(n+1)q_0a)\omega$. 
We put $\omega'=K_X+B+c D$ for 
simplicity. Thus we have 
$pH+qa\omega-\omega'\sim _{\mathbb Q}
(p-c(n+1)p_0)H+(qa-(1+c(n+1)q_0a))\omega$. 
If $p$ and $q$ are large enough and $0<aq-rp\leq aq_0-rp_0$, 
then $pH+qa\omega-\omega'$ is ample. 
It is because 
\begin{align*}
&(p-c(n+1)p_0)H+(qa-(1+c(n+1)q_0a))\omega\\
&=(p-(1+c(n+1))p_0)H+(qa-(1+c(n+1))q_0a)\omega+p_0H+(q_0a-1)\omega.  
\end{align*}

Suppose that $r$ is not rational. 
There must be arbitrarily large $(p, q)$ such that 
$0<aq-rp<\varepsilon =aq_0-rp_0$ and 
$\chi(C, \mathcal O_C(M(p, q)))\ne 0$ by Lemma \ref{62} 
because $P_C(p ,q)=\chi (C, \mathcal O_C(M(p, q)))$ 
is a non-trivial polynomial of degree at most $\dim C$ by 
Lemma \ref{63}. 
Since $M(p, q)-\omega'$ is ample by 
$0<aq-rp<aq_0-rp_0$, 
we have $h^0(C, \mathcal O_C(M(p, q)))=
\chi(C, \mathcal O_C(M(p, q)))\ne 0$ 
by the vanishing theorem (cf.~Theorem \ref{thm5}). 
By the vanishing theorem:~Theorem \ref{thm5}, 
the restriction morphism $$
H^0(X, \mathcal O_X(M(p, q)))\to H^0(C, \mathcal O_C(M(p, q)))
$$ 
is surjective because $M(p, q)-\omega'$ is ample. 
We note that $C$ is an lc center of $(X, B+cD)$. 
Thus $C$ is not contained in $L(p, q)$. Therefore, 
$L(p, q)$ is a proper subset of $L(p_0, q_0)=L_0$, 
giving the desired contradiction. 
So now we know that $r$ is rational. 

We next suppose that the assertion of the theorem concerning 
the denominator of $r$ is false. 
Choose $(p_0, q_0)\in I$ such that 
$aq_0-rp_0$ achieves its maximum value, 
which we can assume has the form $d/v$. 
If $0<aq-rp\leq d/v$ and 
$(p, q)$ is sufficiently large, then $\chi (C, 
\mathcal O_C(M(p, q)))=h^0(C, \mathcal O_C(M(p, q)))$ since 
$M(p, q)-\omega'$ is ample. 
There exists sufficiently 
large $(p, q)$ in the strip $0<aq-rp<1$ with $\varepsilon =1$ 
for which 
$h^0(C, \mathcal O_C(M(p, q)))=\chi (C, \mathcal O_C(M(p, q)))\ne 0$ by 
Lemma \ref{62} since 
$\chi (C, \mathcal O_C(M(p, q)))$ is a non-trivial 
polynomial of degree at most $\dim C$ by Lemma \ref{63}. 
Note that $aq-rp\leq d/v=aq_0-rp_0$ holds automatically for 
$(p, q)\in I$. 
Since 
$H^0(X, \mathcal O_X(M(p, q)))\to H^0(C, \mathcal O_C(M(p,q)))$ 
is surjective by the ampleness of $M(p,q)-\omega'$, we 
obtain the desired contradiction by the same reason as above. 
So, we finish the proof. 
\end{proof}

\section{Cone theorem}\label{sec6}

In this final section, we give a proof of the cone 
theorem for log canonical pairs. 

\begin{proof}[Proof of {\em{Theorem \ref{cone}}}] 
The estimate $\leq 2\dim X$ in (i) can be proved 
by Kawamata's argument in \cite{kawamata} with the aid of \cite{bchm}. 
For details, see \cite[Subsection 3.1.3]{book} 
or \cite[Section 18]{fujino8}. 
The other statements in (i) and (ii) are formal consequences of 
the rationality theorem. For the proof, 
see \cite[Theorem 3.15]{km}. 
The statements (iii) and (iv) are obvious by Theorem \ref{main1} and 
the statements (i) and (ii). See Steps 7 and 9 in \cite[3.3 The Cone 
Theorem]{km}. 
\end{proof}

\ifx\undefined\bysame
\newcommand{\bysame|{leavemode\hbox to3em{\hrulefill}\,}
\fi


\begin{thebibliography}{BCHM}

\bibitem[A]{ambro} 
F.~Ambro, 
Quasi-log varieties, 
Tr. Mat. Inst. Steklova {\textbf{240}} 
(2003), Biratsion. Geom. Linein. Sist. Konechno 
Porozhdennye Algebry, 220--239; translation 
in Proc. Steklov Inst. Math. 2003, no. 1 (240), 214--233.  

\bibitem[BCHM]{bchm} 
C.~Birkar, P.~Cascini, C.~Hacon, and 
J.~McKernan, 
Existence of minimal models for varieties of log general type, 
to appear in J. Amer. Math. Soc. 

\bibitem[F1]{what} 
O.~Fujino, What is log terminal?, in {\em{Flips for 
$3$-folds and $4$-folds}} (Alessio Corti, ed.), 29--62, Oxford 
University Press, 2007. 

\bibitem[F2]{book} 
O.~Fujino, Introduction to the log minimal 
model program for log canonical 
pairs, preprint (2009). 

\bibitem[F3]{fujino} 
O.~Fujino, On injectivity, vanishing and torsion-free theorems for 
algebraic varieties, Proc. Japan Acad. Ser. A Math. Sci. \textbf{85} (2009), no. 8, 
95--100. 

\bibitem[F4]{fujino4} 
O.~Fujino, Introduction to the theory of quasi-log varieties, preprint (2007). 

\bibitem[F5]{fujino5} 
O.~Fujino, Effective base point free theorem for log canonical pairs---Koll\'ar 
type theorem, Tohoku Math. J. \textbf{61} (2009), 475--481. 

\bibitem[F6]{fujino6} 
O.~Fujino, Effective base point free theorem for log canonical pairs---Angehrn--Siu type 
theorems---, to appear in Michigan Math. J. 

\bibitem[F7]{fujino7} 
O.~Fujino, Theory of non-lc ideal sheaves---basic properties---, 
to appear in Kyoto Journal of Mathematics. 

\bibitem[F8]{fujino8} 
O.~Fujino, Fundamental theorems for the log minimal model program, preprint (2009). 

\bibitem[FT]{fujino-takagi} 
O.~Fujino, S.~Takagi, Supplements to non-lc ideal sheaves, preprint (2009). 

\bibitem[Ka]{kawamata} 
Y.~Kawamata, 
On the length of an extremal rational curve, 
Invent. Math. {\textbf{105}} (1991), no. 3, 609--611.

\bibitem[Kl]{kleiman} 
S.~L.~Kleiman, 
Toward a numerical theory of ampleness, 
Ann. of Math. (2) {\textbf{84}} (1966), 293--344. 

\bibitem[KK]{ko-ko} 
J.~Koll\'ar, S.~Kov\'acs, 
Log canonical singularities are Du Bois, 
preprint (2009). 

\bibitem[KM]{km} 
J.~Koll\'ar, S.~Mori, {\em Birational geometry of 
algebraic varieties,} Cambridge Tracts in Mathematics, Vol. {\textbf {134}}, 
1998.

\bibitem[S]{shokurov} 
V.~V.~Shokurov, 
The nonvanishing theorem, 
Izv. Akad. Nauk SSSR Ser. Mat. 
{\textbf{49}} (1985), no. 3, 635--651.
\end{thebibliography}
\end{document}